\newtheorem{thm}{Theorem}[section]  
\newtheorem{cor}[thm]{Corollary}
\newtheorem{defin}[thm]{Definition}
\newtheorem{prop}[thm]{Proposition} 
\begin{document}  
\newcommand{\aA}{\mbox {\sc a}}
\newcommand{\bB}{\mbox {\sc b}}
\newcommand{\xX}{\mbox {\sc x}}
\newcommand{\yY}{\mbox {\sc y}} 
\newcommand{\aaa}{\mbox{$\alpha$}}
\newcommand{\map}{\mbox{$\rightarrow$}}
\newcommand{\ccc}{\mbox{$\cal C$}}
\newcommand{\kkk}{\mbox{$\kappa$}}
\newcommand{\Aaa}{\mbox{$\mathcal A$}}
\newcommand{\Fff}{\mbox{$\mathcal F$}}  
\newcommand{\bbb}{\mbox{$\beta$}}
\newcommand{\sss}{\mbox{$\sigma$}}  
\newcommand{\ddd}{\mbox{$\delta$}} 
\newcommand{\rrr}{\mbox{$\rho$}} 
\newcommand{\Ggg}{\mbox{$\Gamma$}}
\newcommand{\ttt}{\mbox{$\tau$}} 
\newcommand{\bdd}{\mbox{$\partial$}}
\newcommand{\zzz}{\mbox{$\zeta$}}
\newcommand{\Ss}{\mbox{$\Sigma$}}
\newcommand{\Ddd}{\mbox{$\Delta$}}
\newcommand{\Fa}{\mbox{$\Fff_A$}}
\newcommand{\Fb}{\mbox{$\Fff_B$}}
\newcommand{\Thp}{\mbox{$\Theta_P$}}
\newcommand{\Thq}{\mbox{$\Theta_Q$}}
\newcommand{\aub} {\mbox{$A \cup_{P} B$}}
\newcommand{\awb} {\mbox{$A_- \cup_{P_-} B_-$}}
\newcommand{\xuy} {\mbox{$X \cup_{Q} Y$}}
\newcommand{\xwy} {\mbox{$X_- \cup_{Q_-} Y_-$}}
\newcommand{\avb} {\mbox{$A' \cup_{P'} B'$}}
\newcommand{\xvy} {\mbox{$X' \cup_{Q'} Y'$}}
\newcommand{\px} {\mbox{$P_X$}}
\newcommand{\py} {\mbox{$P_Y$}}
\newcommand{\qa} {\mbox{$Q_A$}}
\newcommand{\qb} {\mbox{$Q_B$}}
\newcommand{\Lll}{\mbox{$\Lambda$}}
\newcommand{\inter}{\mbox{${\rm int}$}}

\title{Multiple genus $2$ Heegaard splittings: a missed case}  
\author{John  Berge and  Martin Scharlemann} 
\date{\today}

\thanks{Second author partially supported by an NSF grant.}

\begin{abstract} A gap in \cite{RS} is explored:  new examples are found of closed orientable $3$-manifolds with possibly multiple genus $2$ Heegaard splittings.  Properties common to all the examples in \cite{RS} are not universally shared by the new examples:  some of the new examples have Hempel distance $3$, and it is not clear that a single stabilization always makes the multiple splittings isotopic.

\end{abstract}

\maketitle

\section{Introduction}

In 1998, Rubinstein and the second author \cite{RS} studied the question of when there could be more than one distinct genus $2$ Heegaard splitting of the same closed orientable $3$-manifold.  The goal of the project was modest:  to provide a complete list of ways in which such multiple splittings could be constructed, but with no claim that each example on the list did in fact have multiple non-isotopic splittings (there could be isotopies from one splitting to another that are not apparent).  Nor was there a claim that the list had no redundancies; a $3$-manifold and its multiple splittings might appear more than once on the list.  Such a list would still be useful, for if every example on the list could be shown to have a certain property, then that property would be true for any closed orientable $3$-manifold $M$ that has multiple genus $2$ splittings.  Two examples were given in \cite{RS}:  
\begin{itemize}
\item  If $M$ is atoroidal then the hyperelliptic involutions determined by the two genus $2$ Heegaard splittings commute.
\item Any two genus $2$ Heegaard splittings of $M$ become isotopic after a single stabilization.
\end{itemize}

Despite this modest goal, the argument in \cite{RS} contains a gap.  In 2008, the first author discovered a class of examples that do not appear on the list and which, moreover, have mathematical properties that distinguish them in important ways from the examples that do appear in \cite{RS}.  It is true that, even for the new examples, the hyperelliptic involutions commute.  But we know of no argument showing that the new examples all share the second property above; that is, we cannot show that the newly discovered multiple splittings necessarily become isotopic after a single stabilization (though they do after two stabilizations).  

A third property, shared by all examples in \cite{RS} but not by some of the new examples, is not listed above because the notion of Hempel distance of Heegaard splittings (see \cite{He}) did not exist at the time \cite{RS} was written.  But a retrospective look (see Section \ref{sect:distance} below) will verify that all the splittings described in \cite{RS} have Hempel distance no greater than $2$, whereas results of the first  author \cite{Be2} illustrate that at least some of the new examples have Hempel distance $3$. (This also verifies that the gap in the argument in \cite{RS} actually led to missed examples.)

\bigskip

The present paper serves as an erratum to \cite{RS}\footnote{The error is on p. 533: The last sentence of the first paragraph of Case 2 should have read, ``The {\em same curves} cannot then be twisted in $X$ since $M$ is hyperbolike." This leaves open an additional possibility for $P_X$, $P_Y$, which appears as Subcase B in Section \ref{sect:gap} below.} and  describes the new examples.   Here is an outline: In Section \ref{sect:dehnderiv} we describe a general method for constructing closed orientable $3$-manifolds that appear to have multiple genus $2$ Heegaard splittings; these examples (called {\em Dehn-derived}) are based around Dehn surgery on a pair of strategically placed curves. It follows from the construction that always the hyperelliptic involutions of the alternate splittings coincide. 

It is not immediately obvious that curves supporting Dehn-derived examples can be found, but in Sections \ref{sect:construction1} and \ref{sect:construction2} we give three specific classes of examples.  The classes are denoted $M_H$ (Section \ref{sect:construction1}), $M_{\times I}$ and $M_{hybrid}$ (Section  \ref{sect:construction2}).  ($M_H$ can be viewed as a third variation of \cite[Example 4.2]{RS}.)   For the examples $M_H$ and $M_{hybrid}$ a single stabilization suffices to make the alternate splittings equivalent, but this property is at least not apparent in most cases of $M_{\times I}$. 

Section \ref{sect:gap} describes how the proof of \cite[Lemma 9.5, Case 2]{RS} went astray and how it needs to be altered to fix the gap.  The upshot is that Dehn-derived examples, as described in Section \ref{sect:dehnderiv}, do fill the gap in the original proof. In Section \ref{sect:taxonomy} it is further shown, using new results in \cite{Be}, that {\em any} Dehn-derived example is in fact of type $M_H$, $M_{\times I}$ or $M_{hybrid}$.  Finally, in Section \ref{sect:distance} we verify that all of the old examples that are listed in \cite{RS} are of Hempel distance $2$, whereas at least some Dehn-derived examples are of distance $3$.  (It is easy to see that all Dehn-derived examples are of distance no more than $3$.)

\section{Dehn derived multiple splittings}  \label{sect:dehnderiv}

A {\em primitive $k$-tuple} of curves in the boundary of a genus $g$ handlebody $H$ is a collection $\lambda_1, ... \lambda_k \subset \bdd H$ of $k \leq g$ disjoint simple closed curves so that, for some properly embedded collection $D_1,...,D_k$ of disks in $H$,  $|\lambda_i \cap D_j| = \delta_{ij}, 1 \leq i, j \leq k$.  It is easy to see that the closed complement in $H$ of such a collection of meridian disks is a genus $g - k$ handlebody.  In particular, if $k = g$ then $\lambda_1, ... \lambda_g$ is called a {\em complete set of primitive curves} and the corresponding collection of disks $D_1,...,D_g$ is called a {\em complete set of meridian disks}.  The closed complement of a complete set of meridian disks in $H$ is a $3$-ball. 

Suppose $\Lambda = \lambda_1, ... \lambda_k \subset \bdd H$ is a primitive $k$-tuple of curves in $H$ and let $\aaa_1, ..., \aaa_k$ be the properly embedded collection of curves in $H$ obtained by pushing $\Lambda$ slightly into the interior of $H$.  We can view $H$ as the boundary connect sum of a genus $g-k$ handlebody $H'$ and $k$ solid tori $W_1, ..., W_k$,  with $\lambda_i$ a longitude of $W_i$ and so $\aaa_i$ a core curve of $W_i$.  Then  Dehn surgery on $\aaa_i \subset W_i$ still gives a solid torus.  Hence any Dehn surgery on the family of curves $\aaa_1, ..., \aaa_k$ leaves $H$ still a handlebody.   

\begin{defin} \label{defin:doublyprim} Suppose  $M_0 = H_a \cup_S H_b$ is a Heegaard splitting of a closed $3$-manifold $M_0$.  A simple closed curve $\lambda \subset S$ is {\em doubly primitive} if $\lambda$ is a primitive curve in both handlebodies $H_a$ and $H_b$.
\end{defin}

Suppose $M_0$ is a closed orientable $3$-manifold and that $M_0 = H_a \cup_S H_b$ is a genus $2$ Heegaard splitting of $M_0$.  Suppose further that $\lambda_1, \lambda_2 \subset S$ are two disjoint doubly primitive curves in $S$.

\begin{prop} \label{prop:dehnderiv} Suppose $M$ is a manifold obtained by some specified Dehn surgeries on $\lambda_1$ and $\lambda_2$.  For $i = 1, 2$, let $A_i$ (resp. $B_i$) be the manifold obtained from the handlebody $H_a$ (resp. $H_b$) by pushing the curve $\lambda_i$ into $\inter(H_a)$ (resp $\inter(H_b)$) and performing the specified Dehn surgery on the curve.  

Then $A_1 \cup_S B_2$ and $ A_2 \cup_S B_1$ are two (possibly different) genus $2$ Heegaard splittings of $M$.  
\end{prop}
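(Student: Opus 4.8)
The plan is to realize $M$ itself as the underlying $3$-manifold of each of the two claimed splittings, by performing the two prescribed surgeries ``one on each side'' of the Heegaard surface $S$.

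First I would check that $A_i$ and $B_i$ really are genus $2$ handlebodies. Pushing $\lambda_i$ into $\inter(H_a)$ gives a properly embedded curve $\aaa_i$; since $\lambda_i$ is primitive in $H_a$, we may, exactly as in the discussion preceding Definition~\ref{defin:doublyprim}, write $H_a$ as the boundary connect sum of a solid torus $H'$ and a solid torus $W$ with $\lambda_i$ a longitude of $W$ and $\aaa_i$ a core of $W$, so any Dehn surgery on $\aaa_i$ leaves $H_a$ a handlebody. Hence $A_i$ is a genus $2$ handlebody, and symmetrically so is $B_i$. Because the isotopy pushing $\lambda_i$ off $S$ and the surgery on $\aaa_i$ both take place in the interior, there are canonical identifications $\bdd A_i = \bdd H_a = S$ and $\bdd B_i = \bdd H_b = S$. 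Consequently $A_1 \cup_S B_2$ and $A_2 \cup_S B_1$ are closed orientable $3$-manifolds, each with an evident genus $2$ Heegaard splitting; it remains only to identify each one with $M$.

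Next I would pin down the surgery data. A small tubular neighborhood of $\lambda_i$ in $M_0$ is a solid torus whose boundary torus is canonically the same whether $\lambda_i$ is first nudged slightly into $H_a$ or slightly into $H_b$; taking the surgery slope on this torus (equivalently, using the framing of $\lambda_i$ supplied by $S$) makes ``the specified Dehn surgery on $\lambda_i$'' unambiguous, and $M$ does not depend on the side chosen. Now I use that $\lambda_1$ and $\lambda_2$ are disjoint in $S$: choosing disjoint product neighborhoods of them in $S$, I can push $\lambda_1$ into $H_a$ and $\lambda_2$ into $H_b$, producing in $M_0$ a two-component framed link isotopic to the one defining $M$. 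Dehn surgery is a local operation, so performing these two surgeries modifies $H_a$ precisely to $A_1$ and $H_b$ precisely to $B_2$, while a collar of $S$ — and hence the gluing along $S$ — is left untouched. Therefore $M = A_1 \cup_S B_2$. Interchanging the roles of $\lambda_1$ and $\lambda_2$ (push $\lambda_2$ into $H_a$ and $\lambda_1$ into $H_b$) gives $M = A_2 \cup_S B_1$ in the same way.

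The substantive point — what I expect to be the only real obstacle — is the bookkeeping of framings and boundary identifications: verifying that the $S$-framing on each $\lambda_i$ is genuinely independent of the side chosen, and that the canonical identifications of the various surgery tori and of $\bdd A_i$, $\bdd B_i$ with their counterparts in $M_0$ fit together, so that gluing $A_1$ to $B_2$ along $S$ reproduces $M$ on the nose rather than some reparametrization of it. Everything else is the soft fact that surgery on a link may be carried out component by component when the components lie in disjoint pieces of a decomposition of the ambient manifold. Note that the proposition asserts only that both are genus $2$ Heegaard splittings of $M$; whether their splitting surfaces are isotopic in $M$ is precisely the delicate question taken up in the rest of the paper.
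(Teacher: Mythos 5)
Your proposal is correct and follows essentially the same route as the paper: the whole content is the observation (already made just before Definition~\ref{defin:doublyprim}) that surgery on a pushed-in primitive curve leaves a handlebody a handlebody, which is all the paper's proof says. The additional bookkeeping you supply about framings and why the glued-up manifold is $M$ is accurate but is left implicit in the paper, since the two surgeries are performed on disjoint curves with the same surgery data regardless of which side each is pushed to.
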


\begin{proof}  $A_i$ (resp $B_i$) is obtained from $H_a$ (resp $H_b$) by Dehn surgery on a pushed in copy $\aaa_i$ of a single primitive curve in $S$.  It was just observed that this makes each $A_i$ (resp. $B_i$) a handlebody.
\end{proof}

\begin{defin} \label{defin:dehnderived} 
Two genus $2$ Heegaard splittings $X \cup_Q Y$ and $A \cup_P B$ of a closed $3$-manifold $M$ are called {\rm Dehn derived} (from the splitting $M_0 = H_a \cup_S H_b$ via $\lambda_1 \cup \lambda_2 \subset S$) if the two splittings are created as in Proposition \ref{prop:dehnderiv}.
\end{defin}

\begin{cor} \label{cor:hyper}  Suppose $M = A \cup_P B = X \cup_Q Y$ are a Dehn-derived pair of Heegaard splittings.  Then the two hyperelliptic involutions of $M$, one determined by the Heegaard splitting $A \cup_P B$ and the other by the Heegaard splitting $X \cup_Q Y$, coincide.  
\end{cor}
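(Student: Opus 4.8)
The plan is to produce a \emph{single} involution of $M$ that is simultaneously the hyperelliptic involution of both splittings. Write the two splittings, in the notation of Proposition~\ref{prop:dehnderiv}, as $A_1 \cup_S B_2$ and $A_2 \cup_S B_1$, so $A_i$ (resp.\ $B_i$) is $H_a$ (resp.\ $H_b$) surgered on a push-off $\alpha_i \subset \inter(H_a)$ (resp.\ $\beta_i \subset \inter(H_b)$) of the doubly primitive curve $\lambda_i \subset S$. Let $\iota_0$ be the hyperelliptic involution of the splitting $M_0 = H_a \cup_S H_b$: an orientation-preserving involution restricting on each of $H_a$, $H_b$ to that handlebody's hyperelliptic involution (so each quotient $H_a/\iota_0$, $H_b/\iota_0$ is a $3$-ball branched over three properly embedded unknotted, unlinked arcs) and restricting on $S$ to the hyperelliptic involution $\iota_S$ of the genus $2$ surface. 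Since the Dehn-derived splittings depend only on the isotopy classes of $\lambda_1, \lambda_2$ in $S$, and since every essential simple closed curve on a genus $2$ surface is isotopic to an $\iota_S$-invariant one---and a disjoint pair can be made invariant simultaneously---we may assume $\lambda_1$ and $\lambda_2$ are $\iota_0$-invariant. A primitive curve is neither inessential nor separating, so its invariant representative passes through exactly two of the six fixed points of $\iota_S$; hence the push-offs $\alpha_i, \beta_i$ can be taken $\iota_0$-invariant, each meeting exactly two of the three fixed arcs of $\iota_0$.

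The next step is to perform the prescribed Dehn surgeries \emph{equivariantly}. On the boundary torus of an $\iota_0$-invariant tubular neighborhood of $\lambda_i$, the involution $\iota_0$ restricts to the elliptic involution (quotient the $4$-punctured sphere), and the elliptic involution preserves \emph{every} slope on the torus. Therefore each prescribed surgery can be carried out so that $\iota_0$ extends over the replacement solid torus $V_i$, with $V_i/\iota_0$ a $3$-ball branched over two unknotted arcs. The upshot is an orientation-preserving involution $\iota_M$ of $M$ that agrees with $\iota_0$ off $V_1 \cup V_2$.

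Finally I would check that $\iota_M$ is the hyperelliptic involution of each splitting. To exhibit $M$ as $A_1 \cup_S B_2$, slide the replacement torus for $\lambda_1$ off $S$ into $H_a$ and that for $\lambda_2$ off $S$ into $H_b$, along $\iota_0$-invariant annuli so the slide is equivariant; then $A_1$ and $B_2$ are $\iota_M$-invariant. A routine computation of quotients (delete $N(\alpha_1)/\iota_0$ from the ball $H_a/\iota_0$ and reglue the ball $V_1/\iota_0$) shows $A_1/\iota_M$ is a $3$-ball branched over three arcs; since $A_1$ is a genus $2$ handlebody those arcs are unknotted and unlinked, so $\iota_M$ restricts on $A_1$ to its hyperelliptic involution, and likewise on $B_2$. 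As $\iota_M|_S = \iota_S$, it follows that $\iota_M$ \emph{is} the hyperelliptic involution of $A_1 \cup_S B_2$. Sliding the two replacement tori the other way and repeating the identical argument shows $\iota_M$ is also the hyperelliptic involution of $A_2 \cup_S B_1$. Hence both hyperelliptic involutions equal $\iota_M$, so in particular they coincide.

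The step needing the most care is the last one: verifying that the abstractly built handlebodies $A_i, B_i$ sit inside the surgered manifold $M$ as $\iota_M$-invariant submanifolds and that each quotient is genuinely a $3$-ball branched over three unknotted, unlinked arcs---so that, by the uniqueness up to isotopy of the hyperelliptic involution of the genus $2$ handlebody, $\iota_M$ restricts to \emph{the} hyperelliptic involution there. By contrast the equivariant surgery is routine, once one observes that the elliptic involution respects all slopes.
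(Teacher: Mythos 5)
Your proof is correct and follows essentially the same route as the paper's: position $\lambda_1,\lambda_2$ so that they are invariant under the hyperelliptic involution of $M_0 = H_a\cup_S H_b$, and observe that this involution then induces a single involution on $M$ serving both splittings. The paper dispatches this in three sentences; your write-up supplies the details it leaves implicit (equivariant surgery using the fact that the elliptic involution of the boundary torus preserves every slope, and the verification that the induced involution restricts to the hyperelliptic involution of each handlebody of each splitting).
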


\begin{proof}   Let $M_0 = H_a \cup_S H_b$ be the Heegaard split $3$-manifold from which the two splittings of $M$ are Dehn derived, via $\lambda_1 \cup \lambda_2 \subset S$.  The hyperelliptic involution preserves the isotopy class (though perhaps reversing the orientation) of any simple closed curve in $S$.  We may then position $\lambda_i$ so that the curves are preserved (reversing orientation) by the hyperelliptic involution on $M_0 = H_a \cup_S H_b$.  Then the hyperelliptic involution on $M_0$ naturally induces a single hyperelliptic involution on $M$. \end{proof}

\section{A simple set of examples} \label{sect:construction1}

It is not immediately obvious how to create examples of a Dehn-derived pair of splittings or, very naively, whether examples even exist.  In this section we present and briefly discuss an important concrete class of examples.

Consider a genus $2$ handlebody $H$, constructed from two $0$-handles by connecting them with three $1$-handles.  With this structure $H$ has a natural $\mathbb{Z}_3$ symmetry, shown as $\frac{2\pi}{3}$ rotation about the green axis in Figure \ref{fig:symm3}.  Let $\lambda_1 \subset \bdd H$ be the red curve shown in the figure and $\lambda_2, \lambda_3$ be the other two simple closed curves to which $\lambda_1$ is carried by the $\mathbb{Z}_3$ symmetry.  Then each $\lambda_i$ is a primitive curve on $\bdd H$ and, indeed, any two of the curves, say $\lambda_1, \lambda_2$ constitute a complete set of primitive curves (that is, a primitive pair).  In this case the corresponding pair of meridian disks are the meridian disks of the two $1$-handles through which $\lambda_3$ passes.  

Let $\overline{H}$ be the genus $3$ handlebody obtained by removing from $H$ a neighborhood of the arc in which the axis of symmetry intersects one of the $0$-handles.  It is easy to see that in $\overline{H}$ the collection $\lambda_1, \lambda_2, \lambda_3 \subset \bdd \overline{H}$ is a complete set of primitive curves, that is a primitive $3$-tuple.  

 \begin{figure}[ht!]
    \centering
    \includegraphics[scale=0.7]{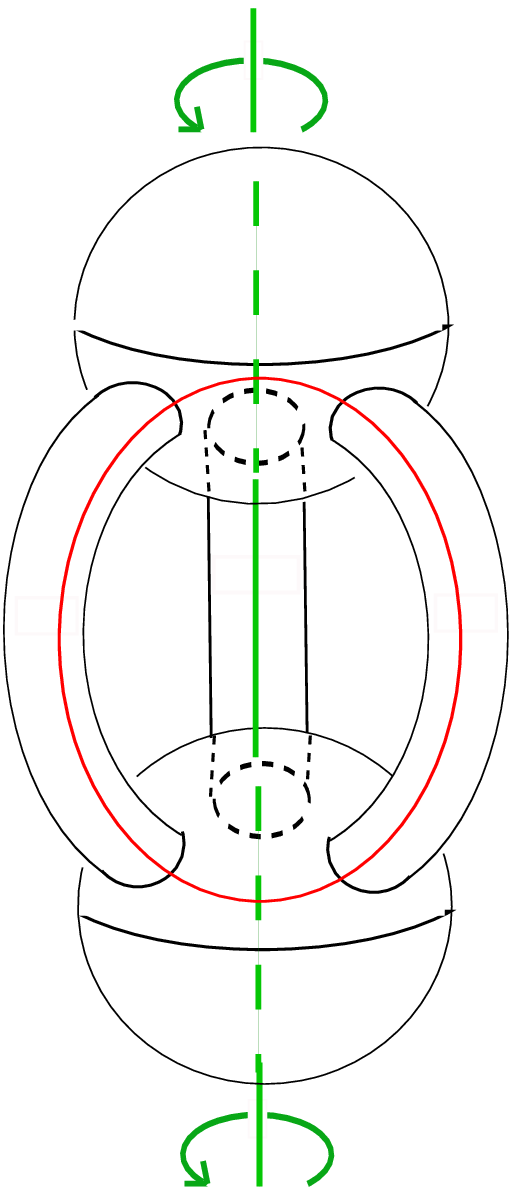}
    \caption{} \label{fig:symm3}
    \end{figure}

To construct some Dehn-derived pairs of Heegaard splittings, begin with two genus $2$ handlebodies $A$ and $B$, on each of whose boundaries lie three disjoint simple closed curves corresponding to $\lambda_1, \lambda_2, \lambda_3 \subset \bdd H$.  Let $\lambda_{ia} \subset \bdd A$ (resp $\lambda_{ib} \subset \bdd B$) be the curve corresponding to $\lambda_i$ in $A$ (resp $B$), for each $1 \leq i \leq 3$.  Adopting (for comparison purposes) notation from \cite[Section 4.2]{RS}, let $\aaa_{na}, \aaa_{sa}, \rrr_a \subset A$ be the triple of curves obtained by pushing $\lambda_{1a}, \lambda_{2a}, \lambda_{3a}$ into the interior of $A$ and let $\aaa_{nb}, \aaa_{sb}, \rrr_b \subset B$ be the triple of curves obtained by pushing $\lambda_{1b}, \lambda_{2b}, \lambda_{3b}$ into the interior of $B$.  Let $N$ be a manifold constructed by identifying an annular neighborhood of $\lambda_{1a}$ in $\bdd A$ with an annular neighborhood of $\lambda_{1b}$ in $\bdd B$ and an annular neighborhood of $\lambda_{2a}$ in $\bdd A$ with an annular neighborhood of $\lambda_{2b}$ in $\bdd B$.  (After the identification, call the annuli $\Aaa_n$ and $\Aaa_s$ with core curves $\lambda_1, \lambda_2$ respectively .)  Then identify the two $4$-punctured spheres $\bdd A - (\Aaa_n \cup \Aaa_s)$ and $\bdd B - (\Aaa_n \cup \Aaa_s)$ by any homeomorphism.  

This construction defines a genus $2$ Heegaard structure on $N$, of course, but it also defines a genus $2$ Heegaard splitting on $M_0$, the manifold obtained from $N$ by arbitrary Dehn surgery on just the two curves $\rrr_a \subset A$ and $\rrr_b \subset B$, for surgery on these pushed-in primitive curves leaves $A$ and $B$ still handlebodies, handlebodies which we denote respectively $H_a$ and $H_b$.  What's more, the curves $\lambda_1, \lambda_2$ are each primitive in both $H_a$ and $H_b$ (though they are not necessarily a primitive pair in either).  Thus the Heegaard splitting $M_0 =  H_a \cup H_b$ gives rise to  two potentially different genus $2$ Heegaard structures on any manifold $M_H$ that is obtained by simultaneously doing further Dehn surgery on the two curves $\lambda_1, \lambda_2$.  That is, a manifold $M_H$ obtained by arbitrary Dehn surgery on all four curves $ \lambda_1, \lambda_2, \rrr_a, \rrr_b \subset N$ has two possibly distinct genus $2$ Heegaard splittings, Dehn derived from the Heegaard splitting $M_0 = H_a \cup H_b$.  One Heegaard structure $M_H = A_1 \cup B_2$ is obtained by pushing $\lambda_1$ to $\aaa_{na} \subset \inter A$ and $\lambda_2$ to $\aaa_{sb} \subset \inter B$ before doing Dehn surgeries on the four curves; the other $M_H = A_2 \cup B_1$ is obtained by pushing $\lambda_1$ to $\aaa_{nb} \subset \inter B$ and $\lambda_2$ to $\aaa_{sa} \subset \inter A$ before doing the Dehn surgeries.  In each case, exactly two of the four Dehn surgered curves lie in each handlebody $A$ and $B$ before the Dehn surgery, and in that handlebody are a pushed-in primitive pair.   

\begin{prop}  \label{prop:stabil}  The two Heegaard splittings $A_1 \cup B_2$ and $A_2 \cup B_1$ of $M_H$ become isotopic after at most a single stabilization.
\end{prop}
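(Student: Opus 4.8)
The plan is to produce a single genus~$3$ Heegaard surface in $M_H$ that is a stabilization of each of $A_1 \cup B_2$ and $A_2 \cup B_1$. Since the stabilization of a fixed Heegaard surface is well defined up to isotopy, exhibiting such a common stabilization shows at once that $A_1 \cup B_2$ and $A_2 \cup B_1$ become isotopic after a single stabilization, which is exactly the assertion.

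The first step is to localize the difference between the two splittings. Both are obtained from the splitting $M_0 = H_a \cup_S H_b$ using Dehn surgeries on the four curves $\rho_a,\rho_b,\lambda_1,\lambda_2$; the only difference is that in $A_1 \cup B_2$ the surgery solid torus arising from $\lambda_1$ lies on the $H_a$ side and that arising from $\lambda_2$ on the $H_b$ side, whereas in $A_2 \cup B_1$ this is reversed (the $\rho_a$- and $\rho_b$-surgery tori are placed identically in both). Hence the two Heegaard surfaces agree outside a neighborhood $N(\lambda_1)\cup N(\lambda_2)\subset M_0$, and inside each $N(\lambda_i)$ they differ only by pushing the surface across the corresponding surgery torus. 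So it suffices to find one trivial $1$-handle, attached along the common part of the two surfaces, across which one can simultaneously isotope the $\lambda_1$-torus from the $H_a$ side to the $H_b$ side and the $\lambda_2$-torus from the $H_b$ side to the $H_a$ side.

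The crucial point --- and the place where the extra structure of $M_H$, absent in $M_{\times I}$, is used --- is that a \emph{single} handle can accomplish \emph{both} transfers. I would verify this in the explicitly $\mathbb{Z}_3$-symmetric model of Figure~\ref{fig:symm3}, exploiting the following feature: a meridian disk dual to $\lambda_1$ can be chosen to be simultaneously dual to $\lambda_2$. Indeed, by the description accompanying Figure~\ref{fig:symm3}, the meridian disk $D$ of the unique $1$-handle over which $\lambda_3$ does \emph{not} run meets each of $\lambda_1$ and $\lambda_2$ exactly once and is disjoint from $\lambda_3$; it therefore survives the $\rho$-surgeries and furnishes a common dual disk $D_a \subset H_a$, with a symmetric partner $D_b \subset H_b$. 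Routing the stabilizing handle along $\lambda_1$ and $\lambda_2$ through $D_a$ produces a handle that is trivial for the splitting $A_1\cup B_2$ and (symmetrically) for $A_2\cup B_1$, and a single handle slide supported near $D_b$ then carries the $\lambda_1$-surgery torus across the Heegaard surface while at the same time pulling the $\lambda_2$-surgery torus back. This common stabilization can also be recognized intrinsically as the genus~$3$ splitting produced by Proposition~\ref{prop:dehnderiv} from the genus~$3$ handlebody $\overline{H}$ of Figure~\ref{fig:symm3}, in which $\lambda_1,\lambda_2,\lambda_3$ form a primitive triple, so that all three surgery tori can be freely repositioned.

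I expect the main obstacle to be exactly the handle-slide bookkeeping of the previous paragraph: one must check carefully that the routed handle is genuinely a stabilizing handle for all four handlebodies $A_1,B_2,A_2,B_1$ after every Dehn surgery has been performed, and that one slide near $D_b$ really does realize both transfers of surgery tori at once. This is a finite, picture-level computation, tractable precisely because the configuration of Figure~\ref{fig:symm3} is so symmetric and every relevant meridian disk is visible; the analogue fails for the examples $M_{\times I}$, where no such common dual disk is available, which is consistent with the fact that those examples are not known to enjoy the single-stabilization property. Once the handle is shown to serve both splittings, uniqueness of the stabilization up to isotopy completes the proof.
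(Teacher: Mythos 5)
Your overall strategy --- exhibit one genus~$3$ splitting of $M_H$ that is a common stabilization of $A_1\cup B_2$ and $A_2\cup B_1$, exploiting the fact that $\lambda_1,\lambda_2,\lambda_3$ become a primitive triple in the genus~$3$ handlebody $\overline{H}$ --- is the same as the paper's, and your closing remark correctly identifies the relevant genus~$3$ surface (all three surgery tori pushed to one side of $\overline{A}$). But the entire content of the proposition is the verification that this genus~$3$ surface is in fact a stabilization of each of the two genus~$2$ surfaces, and that is exactly the step you defer to ``handle-slide bookkeeping.'' As written, your mechanism is not a proof: a handle ``routed along $\lambda_1$ and $\lambda_2$ through $D_a$'' is not visibly a trivial handle for either splitting (note that the disk $D$ you single out meets each of $\lambda_1$ and $\lambda_2$ once, so it is punctured by the pushed-in surgery curves and need not survive as a meridian in $A_1$ or $A_2$ at all), and the assertion that ``one handle slide near $D_b$ carries the $\lambda_1$-surgery torus across the surface while pulling the $\lambda_2$-torus back'' is a restatement of what must be proved rather than an argument for it.

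The ingredient you are missing is \cite[Lemma 2.7]{ST}. The paper's stabilizing handle is concrete: starting from $A_1\cup B_2$, add to $A_1$ (hence delete from $B_2$) a regular neighborhood of the pushed-in curve $\aaa_{sb}\subset \inter(B)$ together with a straight arc from $\bdd B$ to $\aaa_{sb}$. The resulting pair is $\overline{A}'\cup B'$, and the inclusion $B'\subset B_2$ exhibits a genus~$3$ Heegaard splitting of the genus~$2$ handlebody $B_2$; by the Scharlemann--Thompson lemma every such splitting of a handlebody is stabilized, and its stabilizing disks also stabilize $\overline{A}'\cup B'$ inside $M_H$. Since $\overline{A}'\cup B'$ is symmetric under exchanging the roles of $\lambda_1$ and $\lambda_2$, the same argument shows it is a stabilization of $A_2\cup B_1$, completing the proof. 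Replacing your handle-slide paragraph with this tube-plus-\cite{ST} argument turns your outline into the paper's proof; without it, or some equally rigorous substitute, the proposal has a genuine gap at its central step.
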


\begin{proof}  Let $\overline{A}$ and $\overline{B}$ be the genus $3$ handlebodies derived from $A$ and $B$ respectively, just as $\overline{H}$ was derived from $H$.   Here is a natural genus $3$ Heegaard splitting of $M_H$:  in contrast to the construction above, push {\em both} $\lambda_1$ to $\aaa_{na}$ and $\lambda_2$ to $\aaa_{sa}$, so both curves (as well as $\rrr_a$)  lie in $A$ before doing the Dehn surgeries.  Although $A$ may no longer be a handlebody after the Dehn surgeries, it follows from the discussion above that the result on $\overline{A}$ of the surgery on the three curves $\aaa_{na}, \aaa_{sa}, \rrr_a \subset \overline{A} \subset A$ is still a genus three handlebody $\overline{A}'$.  The complement of $\overline{A}'$ in $M_H$ is also a handlebody $B'$:  a single $1$-handle is added to $B$ and surgery is done on the single curve $\rrr_b \subset B$.  Thus $M_H = \overline{A}' \cup B'$ is a genus $3$ Heegaard splitting of $M_H$.

It's fairly easy to see that this Heegaard splitting is a stabilization of $A_1 \cup B_2$ (and so, symmetrically, $A_2 \cup B_1$).  Indeed, an alternate way to construct $\overline{A}' \cup B'$ is to begin with $A_1 \cup B_2$ and add to $A_1$ (and so subtract from $B_2$) a regular neighborhood of the curve $\aaa_{sb} \subset \inter(B)$ and a straight arc from $\bdd B$ to $\aaa_{sb}$.  From this point of view, the inclusion $B' \subset B_2$ defines a genus $3$ Heegaard splitting of the genus $2$ handlebody $B_2$, and any such Heegaard splitting is necessarily stabilized (see \cite[Lemma 2.7]{ST}).  The pair of stabilizing disks are also a pair of stabilizing disks for $\overline{A}' \cup B'$
\end{proof}

\section{A second construction, and a hybrid} \label{sect:construction2}

Here is another natural, but less naive, way to find disjoint pairs of primitive curves on the boundary of a genus $2$-handlebody and so to create a Dehn-derived pair of Heegaard splittings.  Let $F$ denote a torus with the interior of a disk removed.  Then $F \times I$ is a genus $2$-handlebody.  For $\gamma$ any properly embedded essential simple closed curve in $F$, $\gamma \times \{ 0 \}$ (or symmetrically $\gamma \times \{ 1 \}$) is a primitive curve in the handlebody $F \times I$.  Indeed, for $\ddd$ a properly embedded arc in $F$ intersecting $\gamma$ once, $\ddd \times I$ is a meridian disk in $F \times I$ that intersects $\gamma \times \{ 0 \}$ exactly once.

Following this observation, and the example of the previous section, here is a recipe for constructing candidate $3$-manifolds.  Begin with two copies $A$ and $B$ of the surface $F$ and choose two essential (not necessarily disjoint) simple closed curves $\aaa_0, \aaa_1 \subset A$ and two essential (not necessarily disjoint) simple closed curves $\bbb_0, \bbb_1 \subset B$.  Let $\lambda_{0a} = \aaa_0 \times \{ 0 \} \subset \bdd (A \times I),   \lambda_{1a} = \aaa_1 \times \{ 1 \} \subset \bdd (A \times I), \lambda_{0b} = \bbb_0 \times \{ 0 \} \subset \bdd (B \times I), \lambda_{1b} = \bbb_1 \times \{ 1 \} \subset \bdd (B \times I)$.   Identify an annular neighborhood of $\lambda_{0a}$ in $A \times \{ 0 \}$ with an annular neighborhood of $\lambda_{0b}$ in $B \times \{ 0 \}$ and call the core curve of the resulting annulus $\lambda_0$.  Similarly identify an annular neighborhood of $\lambda_{1a}$ in $A \times \{ 1 \}$ with an annular neighborhood of $\lambda_{1b}$ in $B \times \{ 1 \}$ and call the core curve of the resulting annulus $\lambda_1$.  Complete the identification of $\bdd(A \times I)$ with $\bdd(B \times I)$ along the remaining $4$-punctured sphere arbitrarily.  Call the resulting closed $3$-manifold $M_0$, with Heegaard splitting $M_0 = (A \times I) \cup (B \times I)$.  

The $3$-manifold $M_{\times I}$ obtained from $M_0$ by doing arbitrary Dehn surgeries to the simple closed curves $\lambda_0$ and $\lambda_1$ has a Dehn-derived pair of Heegaard splittings: one comes from first pushing $\lambda_0$ into $A \times I$ and $\lambda_1$ into $B \times I$ before the Dehn surgery, the other comes from first pushing $\lambda_1$ into $A \times I$ and $\lambda_0$ into $B \times I$ before the Dehn surgery. 

\bigskip

\noindent {\bf Remarks on stabilization} It is not apparent to us that a single stabilization will make the two Dehn-derived splittings of $M_{\times I}$ equivalent.  The argument of Proposition \ref{prop:stabil}   does not immediately carry over: if both curves $\lambda_{0a} $ and $ \lambda_{1a}$ are pushed into $A \times I$ there is no apparent arc so that the complement $\overline{A \times I}$ of a neighborhood of the arc in $A \times I$  is a genus $3$ handlebody after an arbitrary Dehn surgery on the pushed in  $\lambda_{0a} $ and $ \lambda_{1a}$.  If there is a proper arc $\gamma$ in $A$ that intersects both curves $\aaa_0 \subset A$ and $\aaa_1 \subset A$ in a single point, then the complement   $\overline{A \times I}$ after pushing the interior of $\gamma$ into $A \times I$ is a genus $3$ handlebody, and so a single stabilization suffices, but having such an arc $\gamma$ is not the general situation.  (What is required for such an arc $\gamma$ to exist is that the slopes of $\aaa_0$ $\aaa_1$ in $A$ are a distance at most two apart in the Farey graph \cite[Figure 1]{Mi}.  In that case $\gamma$ has the slope that is incident to the slopes of both $\aaa_0$ and $\aaa_1$ in the Farey graph.)  

On the other hand, it is relatively easy to show that two stabilizations suffice to make the two splittings equivalent.  To see this, push both $\lambda_{0} $ and $ \lambda_{1}$ into $A \times I$ and connect them to respectively $A \times \{ 0 \}$ and $A \times \{ 1 \}$ by straight arcs.  Then add a regular neighborhood of the arcs and of the pushed in curves $\lambda_{0} $ and $ \lambda_{1}$ to $B \times I$ to create a genus $4$ handlebody $\overline{B \times I}$ and simultaneously subtract the regular neighborhood from $A \times I$ to get the genus $4$ handlebody $\overline{A \times I}$.  The resulting genus $4$ Heegaard splitting $M_0 = \overline{A \times I} \cup \overline{B \times I}$ becomes a Heegaard splitting $H_a^+ \cup H_b^+$ of $M_{\times I}$ after the prescribed Dehn surgery on  $\lambda_{0} $ and $ \lambda_{1}$.  Using the argument of Proposition \ref{prop:stabil} it is easy to see that the Heegaard splitting $H_a^+ \cup H_b^+$ destablizes to the genus $3$ splitting obtained by instead pushing $\lambda_0$ into $B \times I$ and then adding to $B \times I$ a regular neighborhood of $\lambda_1 \subset (A \times I)$ and a straight arc attaching it to $A \times \{ 1 \}$.  The argument of Proposition \ref{prop:stabil} applied again shows that this Heegaard splitting destabilizes to the genus $2$ splitting in which $\lambda_0$ is pushed into $B \times I$ and $\lambda_1$ into $A \times I$, one of the Dehn-derived splittings.  But this destabilization process is clearly symmetric: we could equally well have destabilized to the other genus $2$ splitting, in which $\lambda_0$ is pushed into $A \times I$ and $\lambda_1$ into $B \times I$, and this is the other Dehn-derived splitting. 

\bigskip

A further, call it a {\em hybrid} example of a Dehn-derived pair of splittings comes by combining the two constructions above:  Identify annular neighborhoods of $\lambda_1, \lambda_2 \subset \bdd H$ from Section \ref{sect:construction1} with annular neighborhoods of $\lambda_{0b}, \lambda_{1b} \subset \bdd (B \times I)$ and identify the rest of $\bdd H$ with the rest of $\bdd (B \times I)$ in any way.  This gives a closed $3$-manifold $N$ with a Heegaard splitting $H \cup (B \times I)$.  Let $M_0$ be a $3$-manifold obtained by doing an arbitrary Dehn surgery on $\lambda_3 \subset \bdd H$, after pushing it into $\inter(H)$.  Then $M_0$ has the genus $2$ Heegaard splitting (exploiting the notation used above) $M_0 = H_a \cup (B \times I)$.  Let $M_{hybrid}$ be a closed $3$-manifold obtained from $M_0$ by arbitrary Dehn surgeries on the two remaining curves $\lambda_1, \lambda_2 \subset \bdd H_a \subset M_0$.  The Dehn-derived pair of Heegaard splittings for $M_{hybrid}$ is obtained by alternatively pushing $\lambda_1$ into $H_a$ and $\lambda_2$ into $B \times I$ or vice versa.  A single stabilization suffices to make the two splittings equivalent, essentially by the same argument as for $M_H$, in Proposition \ref{prop:stabil}.  

\section{Filling the gap in \cite{RS}} \label{sect:gap}

The gap in \cite{RS} arises because of a faulty sentence in the midst of a long and technical argument which would be difficult to summarize.  We see no good alternative to simply jumping into that argument at a reasonable breaking point and inserting the argument we now believe to be complete.  The jumping in point is on p. 533, in the midst of trying to prove that all cases of multiple genus $2$ Heegaard splittings have been covered in the earlier examples listed in that paper.  Here $M$ is a closed hyperbolike $3$-manifold with Heegaard splittings $M = \aub = \xuy$.  The two splitting surfaces $P$ and $Q$ have been made to coincide on sub-surfaces $P_0 \subset P$ and $Q_0 \subset Q$.  Then $P - P_0$ consists of annular components $P_X$ and $P_Y$ properly embedded in the handlebodies $X$ and $Y$ respectively, and $Q - Q_0$ consists of annular components $Q_A, Q_B$ properly embedded in the handlebodies $A, B$ respectively.  With this as background, we now re-enter the proof of \cite[Theorem 9.4, Case 2]{RS} on page 533, at first echoing what is written there as the proof of Subcase A below.  In filling the gap in the argument we also broaden the possible outcome, as expressed in Proposition \ref{prop:corrected}:

\bigskip

{\bf Case 2:}  $P_X$ and $P_Y$ are both non-empty and the end of each
curve in $\bdd P_X \cup \bdd P_Y$ is parallel to one of $c_1$ or $c_2$.

\begin{prop} \label{prop:corrected}  In this case, either 
\begin{enumerate}
\item[I)] the splittings $M = \aub = \xuy$ are related as in \cite[Example 4.4]{RS} or

\item[II)]the two splittings $M = \aub = \xuy$ are Dehn-derived from a single genus $2$ Heegaard splitting of another manifold $M_0.$
\end{enumerate}
\end{prop}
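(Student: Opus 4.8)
The plan is to work entirely inside the configuration described just before the statement: $M = \aub = \xuy$ with $P$ and $Q$ agreeing on $P_0 = Q_0$, with $P_X, P_Y$ annuli properly embedded in $X, Y$ and $Q_A, Q_B$ annuli properly embedded in $A, B$, and with the hypothesis of Case 2 that every boundary component of $P_X \cup P_Y$ is parallel in $P_0$ to one of the two curves $c_1, c_2$ along which $P$ and $Q$ meet. First I would analyze how the annuli $P_X$ and $P_Y$ sit relative to the curves $c_1, c_2$: since each $\bdd$-component is parallel to $c_1$ or $c_2$, an innermost/outermost argument on the $4$-punctured sphere (or twice-punctured torus) $P_0$ shows that, after isotopy, each annulus has both ends on the \emph{same} curve $c_i$ or one end on each. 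The faulty sentence in \cite{RS} (quoted in the footnote) wrongly excluded one of these combinatorial possibilities; the corrected dichotomy is exactly Subcase A (the configuration already handled in \cite{RS}, giving outcome (I), the \cite[Example 4.4]{RS} picture) versus Subcase B (the newly admitted configuration, which I claim yields outcome (II)).

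In Subcase A I would simply reproduce the argument already on p.\ 533 of \cite{RS}: the annuli can be isotoped so that $P$ and $Q$ are related by twisting along $c_1$ and $c_2$ in a way that matches \cite[Example 4.4]{RS}, using hyperbolicity (``hyperbolike'') to rule out the degenerate possibilities where the twisting curves coincide on both sides. The substantive new work is Subcase B. Here I would show that the annuli $P_X, P_Y, Q_A, Q_B$ together exhibit $c_1$ and $c_2$ as doubly primitive curves in an auxiliary splitting. Concretely: cutting $M$ along $P_0 = Q_0$ and reassembling, the complementary pieces of $X \cup Y$ and of $A \cup B$ differ precisely by how solid-torus neighborhoods of $c_1, c_2$ are glued in; the annuli $P_X$ and $Q_A$ (resp.\ $P_Y$ and $Q_B$) being properly embedded in handlebodies with a boundary component parallel to $c_i$ forces each of $c_1, c_2$ to be primitive in both of the two genus $2$ handlebodies obtained by filling back trivially. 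This produces the ``base'' splitting $M_0 = H_a \cup_S H_b$ with the two disjoint doubly primitive curves $\lambda_1 = c_1$, $\lambda_2 = c_2 \subset S$; unwinding the twisting then identifies $\aub$ and $\xuy$ as exactly the two splittings obtained by the recipe of Proposition \ref{prop:dehnderiv}, i.e.\ Dehn-derived, giving outcome (II).

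The main obstacle I expect is bookkeeping the gluings precisely enough to see the doubly-primitive structure: one must keep careful track of which annulus lies in which handlebody, of the framing/slope data of the twisting that relates $P$ to $Q$, and of the fact that in Subcase B (unlike Subcase A) the \emph{same} pair of curves $c_1, c_2$ is used for the twisting in $X$ and in $Y$ — which is legal only because $M$ need not be hyperbolike along those curves after the surgery. In other words, I would need to argue that the recovered Dehn surgeries on $\lambda_1, \lambda_2$ are consistent: the surgery slope read off from the $X$--$Y$ side must equal the one read off from the $A$--$B$ side, so that a single $M_0$ and a single pair of surgeries account for both splittings. I would close by remarking that Subcases A and B are exhaustive — every end of $\bdd P_X \cup \bdd P_Y$ is parallel to $c_1$ or $c_2$, and the annuli, being incompressible and $\bdd$-incompressible in the relevant handlebodies, cannot be boundary-parallel, so no further configuration arises — which completes the dichotomy and hence the proof of Proposition \ref{prop:corrected}.
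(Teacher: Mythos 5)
Your overall strategy for Subcase B---recognize the missed configuration, exhibit $c_1,c_2$-type curves as doubly primitive in an auxiliary splitting, and conclude outcome (II)---is the right idea, but the proposal has genuine gaps. First, your case division is not the correct one. You split according to whether an individual annulus has both ends on the same $c_i$ or one end on each; the paper's dichotomy (and the one that actually isolates the missed case) is whether some end of $P_Y$ is parallel to an end of $P_X$ or not. In both of the paper's subcases the relevant separating annuli have both ends on a single $c_i$; what distinguishes the new Subcase B is that $P_Y$ is twisted in $Y$ along $c_2$ while $P_X$ is twisted in $X$ along the \emph{other} curve $c_1$ (the erroneous sentence in \cite{RS} had excluded twisting in $X$ altogether, rather than only twisting along the same curve). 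Your dichotomy does not obviously partition the configurations into ``outcome (I)'' versus ``outcome (II)'', and you give no argument that it does. Second, you never invoke the counting constraints of \cite[Lemma 9.5]{RS}, which are essential both in Subcase A (to reduce to one or two annuli in $P_Y$ and match the picture of \cite[Example 4.4]{RS}) and in Subcase B (to conclude that $P_X$ and $P_Y$ are each a \emph{single} separating annulus, without which the clean solid-torus decomposition does not exist).

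Third, the construction of $M_0$ in your Subcase B is too vague to certify. The actual argument identifies $A\cap Y$ and $B\cap X$ as solid tori $W_2, W_1$ (the regions between $P_Y$ and $Q_A$, and between $Q_B$ and $P_X$, on whose boundaries the cores are torus knots), and then must show that the complementary pieces $H_{AX}=A\cap X$ and $H_{BY}=B\cap Y$ are genus $2$ handlebodies---this uses that $P_Y$ boundary-compresses in $Y$ to a separating disk, a step absent from your sketch. The splitting surface of $M_0$ is $\bdd H_{AX}=\bdd H_{BY}$, a \emph{new} surface, and the doubly primitive curves are obtained by identifying the core $a_i$ of $P_X$ or $P_Y$ with $c_i$; they are not simply $c_1,c_2$ sitting on $P_0$ as you assert. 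Finally, the ``consistency of surgery slopes'' issue you flag as the main obstacle is a non-issue in the correct formulation: one recovers $M$ from $M_0$ by removing a neighborhood of each $\aaa_i$ and regluing the solid torus $W_i$, so there is a single surgery per curve by construction, and the two splittings differ only in which handlebody each $\aaa_i$ is pushed into before surgery. As written, your proposal identifies the destination but does not supply the decomposition, the annulus count, or the handlebody verifications needed to get there.
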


\begin{proof}  If at least one annulus in each of $P_X$ or $P_Y$ is non-separating,
then together they would give a non-separating, hence essential, torus
in $M$. This contradicts our assumption that $M$ is hyperbolike. So we
may as well assume that each annulus in $P_Y$ is separating.  Hence the
ends of $P_Y$ are twisted in $Y$ (see \cite[Definition 5.4]{RS}).  No end of $P_Y$ can also be twisted in $X$, for the union along the curve of the solid tori (one in $X$, one in $Y$) on which the curve is a torus knot would be a Seifert submanifold of $M$, contradicting the assumption that $M$ is hyperbolike.  

\bigskip

{\bf Subcase A:}  Some end of $P_Y$ is parallel to an end of $P_X$. [The gap in \cite{RS} was to view this as the only possibility.]

\bigskip

In this case, by \cite[Lemma 5.6]{RS} all of $P_X$ is a collection of parallel non-separating longitudinal annuli in $X$.   If $P_Y$ has ends at both $c_1$ and $c_2$ then neither curve can be twisted in $X$.  In this case each annulus in $P_X$ is non-separating and so has ends that are non-parallel in $Q$.  This implies that each annulus in $P_X$ has one end at $c_1$ and one at $c_2$.  Attach such an annulus in $X$ to the tori in $Y$ on which the $c_i$ are
twisted.  The boundary of the thickened result would exhibit a Seifert
manifold in $M$, again contradicting the assumption that $M$ is
hyperbolike.  We conclude that  $P_Y$ has ends only at $c_2$, say. 

If there were three or more annuli in $P_Y$ (hence six or more ends of $\bdd P_Y$ at $c_2$) then there would be at least four ends of $P_X$ at $c_2$.  No annulus in $P_X$ could have both ends at $c_2$ (since $c_2$ is not twisted in $X$) so there would also be at least four ends of $P_X$ at $c_1$.  This would contradict \cite[Lemma 9.5]{RS}.  So we conclude that $P_Y$ is made up of one or two annuli.    If it's two annuli, necessarily separating and parallel in $Y$, then, again by \cite[Lemma 9.5]{RS} some annulus in $P_X$ has an end at $c_2$.  It cannot have both ends at $c_2$ and must be non-separating and longitudinal in $X$, since $c_2$ is not twisted in $X$.  In this case the relation
between $P$ and $Q$ can be seen as follows (See \cite[Figure 32]{RS}): In \cite[Example 4.4, Variation 2]{RS}, let $P$ be the  splitting given there with Dehn surgery
curve in $\mu_{a_+}$ and $Q$ be the same splitting given there but with
Dehn surgery curve in $\mu_{a_-}$. To view these simultaneously as
splittings of the same manifold $M$, of course, the Dehn surgery curve
has to be moved from $\mu_{a_+}$ to $\mu_{a_-}$, dragging some annuli
along, until the splitting surfaces $P$ and $Q$ intersect as described.

Suppose then that $P_Y$ is a single annulus.  It may have both ends on $P_0$ or it may have one
end on $P_0$ and one end on an end of $P_X$.  (If both ends of $P_Y$ were also ends of $P_X$ then these, together with ends of $P_X$ at $\bdd P_0$ parallel to $c_2$ would exhibit more than two annuli in $P_X$, hence more than two ends of $P_X$ at $c_1$, contradicting \cite[Lemma 9.5]{RS}.)  If $P_Y$ has  one end on $P_0$ and one end on an end of $P_X$, the initial splitting by $Q$ is as in \cite[Example 4.4, Variation 1]{RS} ($X = A_- \cup \sss$), with a Dehn surgery curve lying in
$\mu_{b_+}$, say.  If the splitting is altered by first putting the
Dehn surgery curve in $\mu_{a_+}$  (yielding the
same manifold $M$), then altering as in \cite[Example 4.4]{RS} (i. e. considering \aub\ where $B = B_- \cup \sss$) and
then dragging the Dehn surgery curve from  $\mu_{a_+}$ to
$\mu_{b_+}$, pushing before it an annulus from the $4$-punctured sphere
along which $A_-$ and $B_-$ are identified, we get the splitting
surface $P$, intersecting $Q$ as required. (See \cite[Figure 33]{RS}.)  This completes the proof that I) holds in Subcase A.

\bigskip

{\bf Subcase B:}  No end of $P_Y$ is parallel to an end of $P_X$. 

\bigskip

In view of \cite[Lemma 9.5]{RS}, in this subcase $P_Y$ and $P_X$ each consist of exactly one separating annulus, $P_Y$ twisted in $Y$ with boundary curves parallel to $c_2$ (say) and $P_X$ twisted in $X$ with boundary curves parallel to $c_1$.  This case is symmetric: the annulus in $Q$ lying between the ends of $P_Y$ is $Q_A$ (say) and the annulus in $Q$ lying between the ends of $P_X$ is exactly $Q_B$.  The annulus $P_Y$ cannot be parallel to the annulus $Q_A$ (else $P_0$ and $Q_0$ could be extended to include both) but rather the region between them is a solid torus $W_2 = A \cap Y$ on whose boundary the cores of the annuli are torus knots.  Similarly $B \cap X$ is a solid torus $W_1$ on whose boundary the cores of the annuli $Q_B$ and $P_X$ are torus knots.  The annulus $P_Y$  $\bdd$-compresses in $Y$ to become a separating disk; it follows that $Y - W_2 = B \cap Y$ is a genus $2$ handlebody $H_{BY}$ on which the core $a_2$ of the annulus $P_Y$ is primitive.   Symmetrically, the curve $c_1$ (viewed as the core of the annulus $Q_B$) is primitive in $H_{BY}$, the curve $c_2$ is primitive in the genus $2$ handlebody $H_{AX} = X - W_1 = X \cap A$, as is the core curve $a_1$ of $P_X$.  

Here is another way to describe the manifold $M$ above:  begin with the two genus $2$ handlebodies $H_{AX}$ (which contains disjoint primitive simple closed curves $a_1, c_2$ on its boundary) and $H_{BY}$ (which contains disjoint primitive simple closed curves $a_2, c_1$ on its boundary).  Construct a closed $3$-manifold $M_0$ by identifying $\bdd H_{AX}$ to $\bdd H_{BY}$ by a homeomorphism that identifies $a_i$ with $c_i$, $i = 1, 2$.  Call the resulting curves $\aaa_1, \aaa_2$.  Now recover $M$ from $M_0$ by removing a tubular neighborhood of each $\aaa_i$ and replacing with the solid torus $W_i$; equivalently, do an appropriate surgery on each $\aaa_i$ in $M_0$.  The two Heegaard splittings of $M$ are then seen as follows:  if $\aaa_1$ is pushed into $H_{AX}$ and $\aaa_2$ into $H_{BY}$ before the surgery on the curves, then the resulting Heegaard splitting is $M = X \cup_Q Y$; if $\aaa_1$ is pushed into $H_{BY}$ and $\aaa_2$ into $H_{AX}$ before the surgery then the resulting splitting is $M = A \cup_P B$. That is, the splittings of $M$ are both Dehn-derived from the Heegaard splitting $M_0 = H_{AX} \cup H_{BY}$.  Thus II) holds in Subcase B.
 \end{proof}

\section{A taxonomy of Dehn-derived splittings} \label{sect:taxonomy}

Sections \ref{sect:construction1} and \ref{sect:construction2} give concrete examples of pairs of Dehn-derived fillings.  In this section we show that these examples in fact constitute all pairs of Dehn-derived splittings.   The argument exploits Berge's classification of pairs of primitive curves on genus $2$ handlebodies \cite{Be}, though the classification here is slightly different. 

Let $H$ be a genus $2$ handlebody, with $\lambda_1, \lambda_2, \lambda_3 \subset \bdd H$ the disjoint simple closed curves described in Section \ref{sect:construction1}.  Denote by $\rrr$ the curve in the interior of $H$ obtained by pushing $\lambda_3$ into $H$ and let $H_{surg}$ denote the handlebody obtained from $H$ by a specified Dehn surgery on $\rrr \subset \inter(H)$.   As in Section \ref{sect:construction2}, let $F$ denote a torus with the interior of a disk removed.

\begin{prop}[Berge]  \label{prop:taxonomy} Suppose $\aaa$ and $\bbb$ are disjoint non-parallel primitive curves on the boundary of a genus $2$ handlebody $H$.  Then either 

\begin{enumerate}

\item[A)] there is a Dehn surgery on $\rrr \subset H$ and a homeomorphism $h: H \to H_{surg}$ so that $h(\aaa) = \lambda_1 \subset \bdd H_{surg} $ and $h(\bbb) = \lambda_2 \subset \bdd H_{surg} $ or

\item[B)] there is a homeomorphism $h: H \to F \times I$ so that $h(\aaa) \subset F \times \{ 0 \}$ and $h(\bbb) \subset F \times \{ 1\} $.
\end{enumerate}

\end{prop}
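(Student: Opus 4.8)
The plan is to reduce to Berge's classification of pairs of disjoint primitive curves on a genus $2$ handlebody \cite{Be}, and then to recognize each entry on Berge's list as producing either outcome A) or outcome B). First I would recall the relevant structural fact about a primitive curve $\aaa \subset \bdd H$: it is a longitude of a solid-torus summand $W$ in a decomposition $H = H' \natural W$, where $H'$ is a solid torus. A disjoint pair $\aaa, \bbb$ of primitive curves therefore gives, via the meridian disks $D_\aaa, D_\bbb$ dual to them, a way to see $H$ as built from $1$-handles attached to a ball; the cases of Berge's classification correspond to the finitely many ways the duals $D_\aaa, D_\bbb$ and the curves $\aaa, \bbb$ can be arranged relative to a spine of $H$. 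The key dichotomy is whether $\aaa$ and $\bbb$ together form a \emph{primitive pair} (i.e.\ are simultaneously dual to a \emph{disjoint} pair of disks, making $H$ minus those disks a ball) or not.

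Next I would treat the two halves of the dichotomy separately. If $\{\aaa,\bbb\}$ is \emph{not} a primitive pair, then although each is individually primitive, every meridian disk dual to $\aaa$ must meet $\bbb$, and vice versa; Berge's analysis shows this forces a product structure. Concretely, cutting $H$ along the disk $D_\aaa$ dual to $\aaa$ yields a solid torus in which (the image of) $\bbb$ is a curve meeting every meridian once, so $\bbb$ is also a longitude there; tracking how the two copies of $D_\aaa$ sit on its boundary exhibits $H$ as $F \times I$ with $\aaa \subset F \times \{0\}$ and $\bbb \subset F \times \{1\}$ — this is precisely outcome B), matching the $M_{\times I}$ construction of Section \ref{sect:construction2}. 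If instead $\{\aaa,\bbb\}$ \emph{is} a primitive pair, then $H$ cut along the two disjoint dual disks is a ball, so $\aaa, \bbb$ look like two of the three symmetric curves $\lambda_1,\lambda_2$ on the genus $2$ handlebody $H$ of Section \ref{sect:construction1} — \emph{up to} how the third $1$-handle is glued back, and that regluing data is exactly a Dehn surgery on the pushed-in curve $\rrr$ dual to the third handle. This produces the homeomorphism $h : H \to H_{surg}$ with $h(\aaa) = \lambda_1$, $h(\bbb) = \lambda_2$, which is outcome A).

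Finally I would assemble these two cases, checking that Berge's list of primitive pairs on a genus $2$ handlebody has no entries beyond these two families (this is where one must cite \cite{Be} carefully and note, as the paper flags, that ``the classification here is slightly different'' — so some repackaging of Berge's normal forms into the language of $H_{surg}$ and $F \times I$ is needed) and that the non-parallel hypothesis rules out the degenerate case $\aaa \simeq \bbb$. I expect the main obstacle to be precisely this last bookkeeping step: translating Berge's classification — which is phrased in terms of his own normal forms for curve pairs — into the two clean geometric models $H_{surg}$ and $F\times I$, and verifying completeness of the translation (no missed normal form, no overlap that would need separate handling). The topological arguments in each individual case (recognizing a handlebody-cut-along-a-disk as a solid torus, recognizing an $F\times I$ structure) are routine once the combinatorial setup is pinned down.
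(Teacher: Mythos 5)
Your overall strategy---reduce to Berge's classification \cite{Be} and translate each of his families into one of the two models---is the same as the paper's, but the dichotomy you substitute for Berge's actual list (``primitive pair $\Rightarrow$ A), not a primitive pair $\Rightarrow$ B)'') is misaligned with the truth, and this is a genuine gap rather than a matter of presentation. The pairs that force the Dehn surgery on $\rrr$ in alternative A) are exactly Berge's third family of Type I pairs (\cite[Lemma 3.8(3)]{Be}): $\bbb$ is a longitude $\lambda_b$ of one solid-torus summand of $H$ and $\aaa$ is the band sum of $\lambda_b$ with a nontrivial torus knot on the other summand. Such a pair is primitive curve by curve but is \emph{not} a primitive pair (compare Section \ref{sect:construction1}, where it is noted that $\lambda_1, \lambda_2$ need not be a primitive pair in $H_{surg}$), and it is also not of product type. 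Your argument would therefore file it under B), which is false. The step you use to get there---every dual disk of $\aaa$ meets $\bbb$, hence $\bbb$ meets every meridian of the solid torus obtained by cutting $H$ along $D_{\aaa}$ exactly once and so is a longitude there---is unsupported: primitivity of $\bbb$ in $H$ does not make $\bbb$ a longitude of the cut-open solid torus, and in the band-sum example it is not one.

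The other half of your dichotomy is also off: when the slopes of $h(\aaa)$ and $h(\bbb)$ intersect once in $F$, the pair in $F \times I$ \emph{is} a primitive pair, so ``primitive pair'' and ``type B)'' overlap; and your account of the surgery on $\rrr$ as regluing data for a ``third $1$-handle'' is not right, since cutting a genus $2$ handlebody along two disjoint dual disks leaves a ball with no residual data---the surgery in A) is needed precisely for the non-primitive-pair family above. The paper's proof avoids all of this by taking Berge's Type I/Type II normal forms as given and performing only one nontrivial translation: the band-sum-with-torus-knot picture is rewritten as the band sum $\lambda_a \# \lambda_b$ followed by Dehn surgery on a pushed-in copy of $\lambda_a$, which is exactly the $(\lambda_1, \lambda_2, \rrr)$ model of alternative A). To repair your argument you would need to replace your dichotomy by Berge's and prove, not assert, that every non-product pair is either standard or of the band-sum form.
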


\begin{proof}  This classification is a variant of that described in \cite{Be}.  The Type II pair there, as well as some pairs of Type I, are exactly as described in alternative B).  The interest is in the third example of a Type I pair, in \cite[Lemma 3.8 (3) via Figure 3]{Be}.    In that example, $H$ is viewed as divided into two solid tori by a separating disk $D$; let $\lambda_a$ and $\lambda_b$ be longitudes of the two solid tori into which $D$ divides $H$.  Then $\bbb$ is parallel to $\lambda_b$, and $\aaa$ is the band sum, via a band that crosses $D$ once, of $\lambda_b$ with a torus knot on the solid torus containing $\lambda_a$.  This picture is equivalent to letting $\aaa$ be the band sum $\lambda_a \# \lambda_b$ (through $D$) of $\lambda_b$ with $\lambda_a$, and then performing a Dehn surgery on a disjoint copy of $\lambda_a$ that has been pushed into $H$, to become a core of the solid torus on which $\lambda_a$ lies.  Now translate:  relabel $\lambda_b \subset \bdd H$ as $\lambda_2$ and $\lambda_a \subset \bdd H$ as $\lambda_3$.  Then $\lambda_a \# \lambda_b$ corresponds to $\lambda_1$.  The construction just described is then to push $\lambda_3$ into the interior of $H$ and perform some surgery to get $H_{surg}$.  Afterwards $\aaa$ corresponds to $\lambda_1  \subset \bdd H_{surg}$ and $\bbb$ corresponds to $\lambda_2 \subset \bdd H_{surg}$.  This is exactly alternative A).  \end{proof}

Following Propositions  \ref{prop:corrected} and \ref{prop:taxonomy} there is a fairly clear description of the cases of multiple Heegaard splittings that are missing from \cite{RS}.   According to Proposition \ref{prop:corrected} the only missing cases are pairs of splittings that are Dehn-derived from an initial splitting $H_{AX} \cup H_{BY}$ of a manifold $M_0$.  First determine which of alternatives A) and B) apply to the pairs of surgery curves as they lie on the boundaries of the respective handlebodies:  $\{a_1, c_2\} \subset H_{AX}$ or $\{a_2, c_1\} \subset H_{BY}$.  If both are of type A) then the pair of splittings is Dehn-derived as in the construction of $M_H$ in Section \ref{sect:construction1}.  If both are of type B) then the pair of splittings is Dehn-derived as in the construction of $M_{\times I}$ in Section \ref{sect:construction2}.  If one is of type A) and one of type B) then the pair of splittings is Dehn-derived as in the construction of $M_{hybrid}$ in Section \ref{sect:construction2}. 

It is worth mentioning that there is another view of a pair of primitive curves lying on a handlebody as in A) of Proposition \ref{prop:taxonomy}, a view that more closely resembles that in B):  Let $\aaa, \bbb, \gamma$ be simple closed curves in $F$ so that each pair of curves intersects in exactly one point.  (For example, choose curves in $F$ of slopes $0, 1, \infty$.) Then it is fairly easy to see that the three curves $\aaa \times \{ 0 \}, \bbb \times \{ 1 \}, \gamma \times \{ \frac12 \}$ lie in the handlebody $F \times I$ just as $\lambda_1, \lambda_2, \rrr$ lie in $H$ in the description preceding Proposition \ref{prop:taxonomy}.  So the primitive curves in description A) can be made to look like a special case of those in description B), but with the cost that an extra Dehn surgery has to be performed on a specific curve in the interior of $F \times I$.   This is the twisted product view of \cite[3.2]{Be}.  

\section{Distance} \label{sect:distance}

It would seem possible that the Dehn-derived pairs of Heegaard splittings exhibited above could coincidentally all be contained among the examples already listed in \cite{RS}, for there is no claim that the types of examples of multiple Heegaard splittings we have offered here and in \cite{RS} do not overlap.  But in fact there is an invariant which does show that at least some Dehn-derived pairs of Heegaard splittings described above did not already occur in a different guise in \cite{RS}.  This invariant had not yet been introduced when \cite{RS} was written and is called the {\em (Hempel) distance} of the Heegaard splitting \cite{He}.  
We briefly review:

\begin{defin} \label{defin:distance}  A Heegaard splitting $H_1 \cup_S H_2$ has Hempel distance at most $n$ if there is a sequence $c_0, ..., c_n$ of essential simple closed curves in the splitting surface $S$ so that
\begin{itemize}
\item for each $i = 1,..., n$, $c_i \cap c_{i-1} = \emptyset$
\item $c_0$ bounds a disk in $H_1$
\item  $c_n$ bounds a disk in $H_2$
\end{itemize}

If the splitting has distance $\leq n$ but not $\leq n-1$, then the distance $d(S) = n$.
\end{defin}

A Heegaard splitting of distance $0$ is called {\em reducible}; one of distance $\leq 1$ is called {\em weakly reducible}.  Any Heegaard splitting of a reducible manifold is reducible.  A Heegaard splitting of distance $\leq 2$ is said to have the {\em disjoint curve property} \cite{Th}; any Heegaard splitting of a toroidal $3$-manifold has the disjoint curve property (\cite{He}, \cite{Th}).  A weakly reducible genus $2$ Heegaard splitting is also reducible, so an irreducible Heegaard splitting of genus $2$ has distance at least $2$ (\cite{Th}).  

In the other direction we have:

\begin{prop}  Suppose the manifold $M$ has a Dehn-derived pair of Heegaard splittings.  Then each of these Heegaard splittings has Hempel distance at most $3$.
\end{prop}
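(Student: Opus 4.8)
The plan is to exhibit, for each of the two Dehn-derived splittings, an explicit edge path of length three in the curve complex of the common splitting surface $S$: from a meridian of one handlebody, through the two surgery curves $\lambda_1$ and $\lambda_2$, to a meridian of the other handlebody. Write $M_0 = H_a \cup_S H_b$ for the genus $2$ splitting from which the pair is Dehn-derived via the disjoint doubly primitive curves $\lambda_1, \lambda_2 \subset S$, and consider first the splitting $M = A_1 \cup_S B_2$, where $A_1$ (resp.\ $B_2$) is obtained from $H_a$ (resp.\ $H_b$) by pushing $\lambda_1$ (resp.\ $\lambda_2$) into the interior and performing the prescribed surgery.

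The key step is to find a meridian disk of $A_1$ whose boundary is disjoint from $\lambda_1$ in $S$. Since $\lambda_1$ is primitive in $H_a$, the discussion preceding Definition \ref{defin:doublyprim} lets us regard $H_a$ as a boundary connect sum $H_a' \natural W$, where $W$ is a solid torus having $\lambda_1$ as a longitude and $H_a'$ is a solid torus. Pushing $\lambda_1$ to a core $\alpha_1$ of $W$ and performing the prescribed surgery merely replaces $W$ by another solid torus $W'$, so $A_1 = H_a' \natural W'$ and a meridian disk $D_a$ of the untouched summand $H_a'$ is again a meridian disk of $A_1$; its boundary $\partial D_a$ is an essential (indeed non-separating) simple closed curve of $S = \partial A_1$ lying in the complement of $W$, hence disjoint from $\lambda_1$. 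Symmetrically, using the primitivity of $\lambda_2$ in $H_b$, I would obtain a meridian disk $D_b$ of $B_2$ whose essential boundary $\partial D_b$ is disjoint from $\lambda_2$.

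It then remains to assemble the path. Because the surgeries are carried out in the interiors of $H_a$ and $H_b$, the surface $S$ is unchanged and the curves $\lambda_1, \lambda_2$ remain non-separating, hence essential, in $S$; they are disjoint by hypothesis. Thus $c_0 = \partial D_a$, $c_1 = \lambda_1$, $c_2 = \lambda_2$, $c_3 = \partial D_b$ is a sequence of essential simple closed curves in $S$ with $c_{i-1} \cap c_i = \emptyset$ for $i = 1, 2, 3$, with $c_0$ bounding a disk in $A_1$ and $c_3$ bounding a disk in $B_2$; by Definition \ref{defin:distance} the splitting $A_1 \cup_S B_2$ has Hempel distance at most $3$. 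Interchanging the roles of $\lambda_1$ and $\lambda_2$ gives the same conclusion for $A_2 \cup_S B_1$. I do not expect a genuine obstacle here: the only points needing (routine) verification are that the meridian disk of the complementary summand survives the surgery with essential boundary --- clear, since the surgery is supported entirely in the other summand --- and that $\lambda_1, \lambda_2$ stay essential in $S$ --- clear, since neither $S$ nor the curves on it are touched by the surgery.
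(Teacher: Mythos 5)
Your proposal is correct and follows essentially the same route as the paper: find a meridian disk of $A_1$ disjoint from $\lambda_1$ (the paper gets one by band-summing two copies of a disk dual to $\lambda_1$ along $\lambda_1$, you take the meridian of the complementary summand in the boundary connect sum decomposition --- the same disk up to an inessential choice), note it survives the surgery, and use the length-three path $\bdd D_a, \lambda_1, \lambda_2, \bdd D_b$. No gaps.
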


\begin{proof}  Suppose the splittings are Dehn-derived from a splitting $M_0 = H_a \cup_S H_b$ via the disjoint pair of simple closed curves $\lambda_1, \lambda_2 \subset S$.  With no loss of generality, consider the splitting $M = A \cup_S B$ obtained by pushing $\lambda_1$ into $\inter(H_a)$ and $\lambda_2$ into $\inter(H_b)$ before doing Dehn surgery on the $\lambda_i$.  Since $\lambda_1$ is primitive in $H_a$ there is a properly embedded essential disk $D_a \subset H_a$ that is disjoint from $\lambda_1$.  (For example $D_a$ can be obtained from a meridian disk $D_1 \subset H_a$ that intersects $\lambda_1$ in a single point by band-summing together two copies of $D_1$ along a subarc of $\lambda_1 - D_1$.)  $D_a$ is then also disjoint from the curve $\aaa_1 \subset H_a$ obtained by pushing $\lambda_1$ into $\inter(H_a)$, so $D_a$ remains intact as a meridian of $A$ after surgery on $\aaa_1$.  Hence $\bdd D_a$ and $\lambda_1$ are disjoint curves in $\bdd A$.

Symmetrically, there is a meridian $D_b \subset B$ so that $\bdd D_b$ and $\lambda_2$ are disjoint curves in $\bdd B$.  Then the sequence $\bdd D_a, \lambda_1, \lambda_2, \bdd D_b$ shows that the splitting $A \cup_S B$ has distance at most $3$. \end{proof}

\begin{prop} All examples of multiple Heegaard splittings appearing in \cite[Section 4]{RS} have Hempel distance $\leq 2$.
\end{prop}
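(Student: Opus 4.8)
The plan is to use the standard reformulation of distance $\leq 2$: a Heegaard splitting $M = H_1 \cup_S H_2$ has distance at most $2$ precisely when $S$ contains an essential simple closed curve $c$ disjoint from the boundary of some meridian disk $D_1 \subset H_1$ and from the boundary of some meridian disk $D_2 \subset H_2$; the length-$2$ path $\partial D_1, c, \partial D_2$ then witnesses the bound. (Since the splittings under discussion are irreducible genus $2$ splittings their distance is in fact exactly $2$, but only the upper bound is needed.) When the underlying manifold of one of the examples happens to be toroidal the conclusion is immediate, since any Heegaard splitting of a toroidal manifold has the disjoint curve property \cite{He,Th}; so the point is to exhibit such a curve $c$ for the atoroidal examples, and I would do this uniformly.

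The uniform source of $c$ is the surgery description common to the examples of \cite[Section 4]{RS}. In each case $M$ is obtained from a genus $2$ Heegaard-split manifold by Dehn surgery on one or two simple closed curves lying in (or isotopic into) the splitting surface, and the two alternative splittings of $M$ differ, in the manner later analyzed in \cite[Section 9]{RS}, by dragging such a curve --- together with an incidental family of parallel annuli cut from a $4$-punctured sphere --- across the surface; these are the curves called $c_1, c_2$ in Section \ref{sect:gap}, and they lie in the common subsurface of the two splitting surfaces. For $c$ I take a curve of $S$ parallel to one of them. On at least one of the two sides this curve is primitive (or longitudinal) in the relevant handlebody, so, exactly as in the proof of the preceding proposition, there is a meridian disk of that handlebody disjoint from the curve and from its pushed-in copy; this disk survives the Dehn surgery and serves as one of $D_1, D_2$. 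On the other side the curve is either again primitive, giving the remaining disk directly, or it is \emph{twisted} in the handlebody in the sense of \cite{RS}: it is the core of a separating annulus cutting off a solid torus $V$ on whose boundary the core is a nontrivial torus knot. In that situation the curve is primitive in the complementary genus $2$ handlebody $H'$, a meridian disk of $H'$ can be chosen disjoint both from the curve and from the cutting annulus, and regluing $V$ along an annulus cannot make this disk inessential; it therefore serves as the remaining meridian disk. Either way $\partial D_1, c, \partial D_2$ is the desired path.

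To finish I would run through the examples of \cite[Section 4]{RS} one at a time (there are a handful, several carrying variations), in each case naming the curve $c$ in that paper's notation (one of the curves $\mu_{a_\pm}$, $\mu_{b_\pm}$, $\rho$, depending on the example) and recording which of the two alternatives --- primitive, or twisted cutting off a solid torus --- occurs on each side. For the simpler examples this reading-off is essentially immediate from the construction; for \cite[Example 4.4]{RS} and its variations, which are exactly the ones reappearing in Subcase A of Proposition \ref{prop:corrected}, one has to check in addition that the named curve genuinely lies in $S$ and is disjoint from every one of the dragged annuli, not merely isotopic in $M$ to such a curve.

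The step I expect to be the main obstacle is the bookkeeping that guarantees the chosen meridian disks remain essential after all the Dehn surgeries have been performed: one must check both that the disk disjoint from a primitive surgery curve does not become boundary-parallel, or come to bound a ball, once the surgery is done (the same subtlety handled in \cite{RS} and in the proof of the previous proposition, needing the curve to be genuinely primitive with a suitable slope), and, in the twisted case, that a meridian of the complementary genus $2$ handlebody $H'$ stays essential after the solid torus $V$ is glued back along the twisting annulus. Both hold, but the verification is least transparent precisely in the busier examples, where several annuli are dragged across the $4$-punctured sphere at once and the position of $c$ relative to all of them must be tracked.
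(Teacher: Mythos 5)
Your overall strategy is the paper's: dispatch the toroidal examples by citing \cite{He,Th}, and for the rest exhibit the disjoint curve property by finding a curve in the splitting surface --- parallel to the core of one of the identification annuli, what the paper calls a \emph{seam} --- together with a meridian disk on each side disjoint from it. So the skeleton is right. But the proposal stops at the skeleton: the entire content of the paper's proof is the example-by-example verification (Subsections 4.1--4.4 of \cite{RS} and their seven variations), and you explicitly defer that check (``To finish I would run through the examples \dots one at a time''). A proof of this proposition \emph{is} that run-through; without it nothing has been established.

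Moreover, the uniform dichotomy you set up to handle each case --- on each side the seam is either ``primitive'' (band-sum a meridian to get a disjoint disk) or ``twisted'' (pass to the complementary genus $2$ handlebody) --- is aimed at the wrong difficulty and would not carry you through the harder examples. The twisted alternative never actually arises in the verification. The real issue, which your last paragraph only gestures at, is that in several examples \emph{two} surgery curves are pushed into the same handlebody (e.g.\ \cite[Subsection 4.2, Variation 2]{RS}, and Variations 5 and 6 of \cite[Subsection 4.4]{RS}), so the meridian disk you choose must be disjoint from the seam \emph{and from every surgery curve in that handlebody simultaneously}, or it does not survive the surgeries as a disk at all. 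Establishing that such a disk exists requires the specific geometry of the triple $\lambda_1,\lambda_2,\lambda_3$ of Section \ref{sect:construction1} (a meridian of $A$ disjoint from $\lambda_1$ and $\lambda_2$, a meridian of $B$ disjoint from $\lambda_1$ and $\lambda_3$, forcing the seam parallel to $\lambda_1$ to be the one that works); no generic primitive/twisted case split produces it. Finally, Variation 6 needs a separate small argument (if two curves parallel to $\sss$ are surgered the manifold acquires a Seifert piece, so only one such surgery needs to be folded into the Variation 5 analysis), which your outline does not anticipate.
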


\begin{proof}  Following the comments above we can restrict attention to irreducible, atoroidal manifolds.  We briefly run through the examples as they appear in \cite[Section 4]{RS}.  Typically the description of an example $H_1 \cup_S H_2$ in \cite{RS} consists of two parts: A collection of annuli $\mathcal{A} \subset S$ along which  $\bdd H_1$ and $\bdd H_2$ are identified, followed by an arbitrary identification of $\bdd H_1 - \mathcal{A}$ with  $\bdd H_2 - \mathcal{A}$.  From this point of view the simple closed curves $\bdd \mathcal{A} \subset S$ that separate one sort of region from the other will be called the {\em seams} of the Heegaard splitting.  We will observe that in \cite{RS} some seam is always disjoint from an essential disk in $H_1$ and an essential disk in $H_2$.  This demonstrates that the splitting has the disjoint curve property and so  has distance $\leq 2$.

To be specific:  In \cite[Subsection 4.1]{RS}, \cite[Subsection 4.2, Variation 1]{RS} and \cite[Subsection 4.4, Variations 1 and 2]{RS}, the meridians of the $1$-handles $e_a$ and $e_b$ are disjoint from the seams.  \cite[Subsection 4.2, Variation 2]{RS} is slightly more complicated.  It is a bit like the construction in Section \ref{sect:construction1} above:  Handlebodies $A$ and $B$ are identified along neighborhoods of all three curves $\lambda_i, i = 1, 2, 3$, Dehn surgery is done to all three, with $\lambda_1, \lambda_2$ pushed into $A$ and $\lambda_3$ into $B$ (then vice versa).  But there is a meridian of $A$ disjoint from $\lambda_1$ and $\lambda_2$ and a meridian of $B$ disjoint from $\lambda_1$ and $\lambda_3$, so a seam parallel to $\lambda_1$ demonstrates that the splitting of  \cite[Subsection 4.2, Variation 2]{RS} has the disjoint curve property.  

The manifolds in  \cite[Subsection 4.3]{RS} and \cite[Subsection 4.4, Variations 3, 4, and 7]{RS} are all toroidal, so they are of distance $\leq 2$.  What remains are \cite[Subsection 4.4, Variations 5 and 6]{RS} and we adopt the terminology there.  In Variation 5, with, say, $\rrr_a \subset A_-$, the seams that are the boundary of the $4$-punctured sphere $\bdd A_- \cap \bdd \Ggg$ are all disjoint from the meridian of the $1$-handle $e_b \subset B$ and, in $A_-$, any one of these seams together with $\rrr_a$ lie in $A_-$ as two of the $\lambda_i$'s of Section \ref{sect:construction1} above lie in $H$.  In particular, there is a meridian of $A_-$ disjoint from both the seam and from $\rrr_a$.  Thus that seam again illustrates that the splitting has the disjoint curve property.  

The argument for Variation 6 is much the same.  First note that if, in that Variation, Dehn surgeries are done on two curves parallel to $\sss$, then the resulting manifold has a Seifert piece and so has distance $\leq 2$.  So the only change we need to consider from Variation 5 is Dehn surgery on a single curve parallel to $\sss$.  If that curve lies in $B$ the argument for Variation 5 suffices; if it is in $A_-$ this merely forces us to pick a specific seam in the argument for Variation 5, a seam parallel to the new surgery curve.
\end{proof}

In contrast, some of the examples constructed in this paper can be shown to have distance $3$, so they cannot have appeared in any case considered in \cite{RS}.  See \cite{Be2} (also \cite{Sc}) for details.  

\bigskip

{\small  }

\vspace{10mm}

\baselineskip 14pt \noindent {\sf jberge@charter.net}

\vspace{10mm}

\baselineskip 14pt \noindent {\sf Department of Mathematics\\ University 
of 
California\\ Santa Barbara, CA 93106\\ mgscharl@math.ucsb.edu}

\end{document}